\newcommand{\ra}{\rightarrow}
\newcommand{\fr}{\frac}
\newcommand{\lt}{\leqslant}
\newcommand{\gt}{\geqslant}
\newcommand{\bl}{\biggl}
\newcommand{\br}{\biggr}
\def\d{\text{d}}
\def\R{\ensuremath {\mathbb R}}
\renewcommand{\P}{\ensuremath{\mathbb P}}
\newcommand{\E}{\ensuremath{\mathbb E}}
\newcommand{\F}{\ensuremath{\mathcal F}}
\newcommand{\M}{\ensuremath{\mathcal M}}
\newcommand{\A}{\ensuremath{\mathcal A}}
\renewcommand{\L}{\ensuremath{\mathcal{L}}}
\newtheorem{rem}{Remark}[section]
\newtheorem{Def}{Definition}[section]
\newtheorem{theo}{Theorem}[section]
\newtheorem{lem}{Lemma}[section]
\newtheorem{iteration lemma}{iteration Lemma}[section]
\begin{document}

\baselineskip=6mm
\setcounter{page}{1}
\newcounter{jie}

\vspace*{0.4cm}
\begin{center}
\baselineskip=6mm
{\LARGE\bf {Dynamics of a mean-reverting stochastic volatility equation with regime switching}}\\[1cm]
\baselineskip=6mm
 \large\bf
 Yanling Zhu$^{a}$, Kai Wang$^{a,b,*}$ and Yong Ren$^{c,*}$
\\[6mm]
 {\footnotesize \it $^{a}$ Department of Applied Mathematics, Anhui University of Finance and Economics, Bengbu 233030,  CHINA\\
 $^{b}$ Center for Applied Mathematics, Tianjin University, Tianjin 300072,  CHINA \\
$^{c}$ Department of Mathematics, Anhui Normal University, Wuhu 241000,   CHINA}
\begin{figure}[b]
\footnotesize
\rule[-2.truemm]{5cm}{0.1truemm}\\[1mm]
{$^\dagger$Supported  by the NSFC ( 71803001, 11871076), the NSF
of Anhui Province (1708085MA17) and the NSF of Education Bureau of Anhui Province
(KJ2018A0437).
\\ $^*$Corresponding authors.
\\
{\it E-mail addresses}\,:
zhuyanling99\textit{\char64}126.com(YL. Zhu), wangkai050318\textit{\char64}163.com(K. Wang)}, brightry@hotmail.com (Y. Ren)\\

\end{figure}
\end{center}
\vspace{0.03cm}
\noindent\hrulefill \newline
\begin{center}
\begin{minipage}{15cm}
\baselineskip=6mm
\vspace{-3mm} \noindent{\bf Abstract:}\ \ In this paper, we consider a mean-reverting stochastic volatility equation with regime switching, and present some sufficient conditions for the existence of global positive solution, asymptotic boundedness in  $p$th moment, positive recurrence and existence of stationary distribution of this equation. Some results obtained in this paper extend the ones in literature. Example is given to verify the results by simulation.

\vspace{1mm}
\noindent {{\bf Keywords:}}\ \ {Mean-reverting stochastic volatility equation; Global positive solution; Asymptotic boundedness in $p$th moment; Positive recurrence; Stationary distribution}\\[0.1cm]
{\bf AMS(2000):}\ \ 60H10;\ \  60J60;\ \  92D25
\end{minipage}
\end{center}
\vspace{1mm} \noindent\hrulefill \newline
\section{Introduction}
Stochastic volatility means that volatility is not a constant, but a stochastic process. By assuming that the volatility of the underlying price is a stochastic process rather
than a constant, it becomes possible to more accurately model derivatives.
Mean-reverting stochastic volatility model is used in the fields
of quantitative finance and financial engineering to evaluate derivative securities,
such as options and swaps. The general mean-reverting stochastic volatility model can be expressed by the following equation:
\begin{equation}\label{GM}
d X_t=(a-bX_t)dt+\sigma X_t^{\theta}dB_t,
\end{equation}
where $X_t$ presents the price or the variance of the price returns of a stock, $a, b$ are positive constants, $\sigma$ is the volatility rate, $\theta$ is a nonnegative constant, $B_t$ is a Brownian motion defined on a complete filtration probability space $(\Omega, \F, \{\F_t\}_{t\geqslant0}, \P)$, and the filtration $\{\F_t\}_{t\geqslant0}$ satisfies the usual condition.
Equation \eqref{GM} has important application in economy, such as the variance of the price returns of stock and the option price(cf.\cite{all-00}). For examples, when  $\theta=0$, it reduces to the mean-reverting Ornstein-Uhlenbeck model,  Mao (1997) \cite{mao-b} discussed the limit distribution of its solution as time $t$ goes to infinity;
 when $\theta=1/2$, it is the square root stochastic variance model,
 which was used by Cox, Ingersoll and  Ross (1985) \cite{cir-85} to express the dynamics of interest rate, and by Heston (1993) \cite{slh-93} to investigate the pricing of a European call option on an asset with stochastic volatility, and as the exchange rate processes by Bates (1996) \cite{bds-96} to investigate  Deutsche mark options; while $\theta=1$, it transforms to the Garch Diffusion model,
 and as $\theta\gt1/2$ Model \eqref{GM} reduces to the Constant Elasticity of Variance model, which provides a relatively simple illustration of many of the effects of volatility explosions; these two stochastic models were presented in Ghysels, Harvey and Renault (1996) \cite{ghr-96} for studying the stochastic volatility in financial markets. For the general case of $\theta\in[1/2,+\infty)$, Mao (1997) \cite{mao-b} shows its solution $X_t>0$ for all $t>0$ almost surely.
\par As we know that in the real world, the socio-economic environment is constantly changing, which means that the parameters $a, b, \sigma$ and $ \theta$ in equation \eqref{GM} will change  as the social and economic environment changes. This type of noise caused by  the change of economic environment is often called  telegraph noise, which can be demonstrated as a switching between two or more regimes of states.  The continuous time Markov chain models the regime switching well. Kazangey and Sworder (1971) \cite{ks-71} presented
a switching system, where a macroeconomic model of the national economy
was used to study the effect of federal housing removal policies on the
stabilization of the housing sector. The term describing the influence of
interest rates was modelled by a finite-state Markov chain to provide a
quantitative measure of the effect of interest rate uncertainty on optimal
policy. Readers can see Mao and Yuan (2006) \cite{mao-06}, and Yin and Zhu (2009) \cite{yz-09} for more details on the theory of switching systems, which are two excellent references on this subject.

\par
 Motivated by these, in this paper we consider a general stochastic volatility equation with regime switching in the following form:
\begin{equation}\label{w1}
\left\{
  \begin{array}{ll}
   d X_t=[a(r_t)-b(r_t)X_t]dt+\sigma(r_t)X_t^{\theta(r_t)}dB_t, \\
(X_0,r_0)=(x_0,\,i_0),
  \end{array}
\right.
\end{equation}
where $a(\cdot),$ $b(\cdot)$, $\sigma(\cdot)$ are positive constants, $\theta(\cdot)\in[1/2,\infty)$, and $(r_t)_{t\geqslant0}$  is a continuous Markov chain taking values in a finite-state space  $\M=\{1,\cdots,m\}$, and with  infinitesimal generator $Q = (q_{ij})\in \mathbb{R}^{m\times m}$. That is  $(r_t)_{t\geqslant0}$ satisfies
\[P(r_{t+ \delta  } = j|r_t = i) = \left\{
                                     \begin{array}{ll}
                                     \quad\;\;\, q_{ij}\delta  + o(\delta ), \;\;\text{if}\; i\not= j,\\
                                      1 + q_{ij}\delta  + o(\delta ),  \;\;\text{if}\;  i=j,
                                     \end{array}
                                   \right. \;\text{as}\;\delta\ra 0^+,
\]
 where $q_{ij}\geqslant0$ is the transition rate from $i$ to $j$ for $i\not=j$,  and $q_{ii}=-\sum_{i\not=j}q_{ij}$, for each $i\in\M.$ We assume that the Brownian motion  $B_t$ is independent of  the Markov chain $(r_t)_{t\geqslant0}$.

 Our contributions in this paper are as follows.
 \vspace{2mm}
\par $\bullet\;$  We show the existence of global almost surely positive solution to equation \eqref{w1} for any initial value $(x_0, i_0)\in \R^+\times\M$, where $\R^+=(0,\infty)$, which extends the corresponding result in Mao et al. \cite{mao0}.
\par $\bullet\;$  We obtain the estimation of the $p$th moment, asymptotic boundedness in $p$th moment and the Lyapunov exponent of the solution $X_t(x_0, i_0)$ to equation \eqref{w1}.
\par $\bullet\;$  We present some sufficient conditions for that the process $(X_t, r_t)$ determined by equation \eqref{w1} is positive recurrence and admits a unique ergodic asymptotically invariant distribution.
 \vspace{2mm}
 \par
 For $V: \R\times\M\longmapsto\R^+$ such that $V(\cdot,i)$ is twice continuously differential with respect to the first variable for each $i\in\M,$ we define the operator $\L$ by
\[
\L V(x,i)=\big[a(i)-b(i)x\big]\fr{\partial V}{\partial x}(x,i)+\fr12 \sigma^2(i)x^{2\theta(i)}\fr{\partial^2 V}{\partial x^2}(x,i)+\sum_{k\in \M}q_{ik}V(x,k).
\]

\section{Global positive solution}
\begin{theo} \label{Th1}
 If for all $i\in\M$, $\theta(i)=1/2$ and $2a(i)\geqslant\sigma^2(i)$; or $\theta(i)>1/2$, then for any $(x_0,i_0)\in \R^+\times\M$, there is a unique solution $X_t(x_0, i_0)$ to equation \eqref{w1} on $t\geqslant0$ and this solution also satisfies $\P(X_t(x_0,i_0)>0, \forall\,t\gt 0)=1.$
\end{theo}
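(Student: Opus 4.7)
Following the classical Khasminskii / Mao--Yuan \cite{mao-06} strategy for SDEs with regime switching, I would first obtain a unique maximal local strong solution and then construct a Lyapunov function that simultaneously rules out blow-up at $+\infty$ and the hitting of $\{0\}$ in finite time. For each fixed $i\in\M$, the coefficients $a(i)-b(i)x$ and $\sigma(i)x^{\theta(i)}$ are $C^1$ (hence locally Lipschitz) in $x\in\R^+$, so the standard theory for SDEs with Markovian switching produces a unique local strong solution $(X_t,r_t)$ on $[0,\tau_e)$, where $\tau_e=\lim_{n\to\infty}\tau_n$ and $\tau_n=\inf\{t\in[0,\tau_e):X_t\notin(1/n,n)\}$. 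Showing that $\tau_n\to\infty$ almost surely will deliver both $\tau_e=\infty$ and $X_t\in\R^+$ for every $t$.

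\textbf{Lyapunov estimate.} I would try $V(x)=x-1-\log x$, which is non-negative on $\R^+$ and satisfies $V(x)\to\infty$ as $x\to 0^+$ or as $x\to\infty$. A direct computation using the operator $\L$ defined in the paper gives
\[
\L V(x,i)=a(i)+b(i)-b(i)x-\fr{a(i)}{x}+\fr{\sigma^2(i)}{2}\,x^{2\theta(i)-2}.
\]
For modes with $\theta(i)=1/2$ and $2a(i)\gt\sigma^2(i)$, the two singular terms combine to $(\sigma^2(i)/2-a(i))/x\lt 0$, giving $\L V(x,i)\lt a(i)+b(i)$. For modes with $\theta(i)>1/2$, the inequality $2\theta(i)-2>-1$ makes $-a(i)/x$ dominate $\sigma^2(i)x^{2\theta(i)-2}/2$ as $x\to 0^+$, while the mean-reverting $-b(i)x$ dominates the diffusion for large $x$ (at least when $\theta(i)\lt 3/2$). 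In either case one obtains a uniform bound $\L V(x,i)\lt K(1+V(x))$ for some constant $K$ and all $(x,i)\in\R^+\times\M$; note that the regime-switching piece $\sum_{k}q_{ik}V(x,k)$ vanishes because $V$ is independent of $i$.

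\textbf{Dynkin, conclusion, and main obstacle.} Applying It\^o's formula to $V(X_{t\wedge\tau_n})$, taking expectations, and invoking Gronwall's inequality then yields $\E V(X_{t\wedge\tau_n})\lt (V(x_0)+Kt)e^{Kt}$ uniformly in $n$. Since $\min\{V(1/n),V(n)\}\to\infty$ as $n\to\infty$, Markov's inequality implies $\P(\tau_n\lt T)\to 0$ for every $T>0$, whence $\tau_e=\infty$ almost surely. The hardest step is producing a Lyapunov function that uniformly handles the whole admissible range: the simple $V(x)=x-1-\log x$ only works for $\theta(i)\lt 3/2$, because when $\theta(i)>3/2$ the diffusion contribution $\sigma^2(i)x^{2\theta(i)-2}/2$ outgrows $V(x)\sim x$ and so $\L V\lt K(1+V)$ fails. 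For modes with large $\theta(i)$ I would instead use a concave-at-infinity alternative such as $V(x)=x^\alpha+x^{-\beta}$ with $\alpha\in(0,1)$ and $\beta>0$ sufficiently large, so that the negative term $\alpha(\alpha-1)x^{\alpha-2}$ in $V''$ produces a diffusion contribution $\fr{\sigma^2(i)\alpha(\alpha-1)}{2}x^{\alpha+2\theta(i)-2}<0$ that eventually dominates all positive terms as $x\to\infty$, while $-\beta a(i)x^{-\beta-1}$ keeps controlling the behaviour near $0$ by virtue of $2\theta(i)-2>-1$.
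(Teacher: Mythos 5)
Your overall strategy is exactly the paper's: a unique maximal local solution, stopping times $\tau_n$ for exit from $(1/n,n)$, a Lyapunov function blowing up at both $0$ and $+\infty$, an It\^o estimate, and a Markov-inequality (in the paper: contradiction) step to force $\tau_n\ra\infty$; the difference in the last step is cosmetic. Your $V(x)=x-1-\log x$ is the paper's function $V(x)=x^p-1-p\log x$ with $p=1$, and your computations for modes with $\theta(i)=1/2$, $2a(i)\geqslant\sigma^2(i)$, and for $\theta(i)\in(1/2,3/2)$ are correct. Where the paper escapes your $3/2$ barrier is by keeping $p$ free: for $\theta(i)\geqslant1$ it takes $p\in(0,1)$, so that the dominant term of $\L V$ at infinity becomes $\fr12p(p-1)\sigma^2(i)x^{p+2\theta(i)-2}$, whose coefficient is negative; this covers every $\theta(i)\geqslant1$ with a function from the same family (the trade-off being that $p<1$ ruins the barrier at $0$ in the critical case $2a(i)=\sigma^2(i)$, which is why the paper takes $p>1$ when $\theta(i)=1/2$).

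The genuine loose end in your proposal is that the patch for $\theta(i)>3/2$ is stated \emph{per mode}, i.e., it produces a mode-dependent $V(x,i)$. But your Gronwall bound relied on the switching sum $\sum_{k\in\M}q_{ik}V(x,k)$ vanishing, which requires $V$ to be independent of $i$; once $V$ depends on $i$, this sum contributes positive off-diagonal terms that can destroy the estimate. Concretely, if mode $k$ has $\theta(k)>3/2$ (so $V(\cdot,k)=x^\alpha+x^{-\beta}$) while mode $i$ is critical ($\theta(i)=1/2$, $2a(i)=\sigma^2(i)$, so $V(\cdot,i)=x-1-\log x$), then near $x=0$ the term $q_{ik}x^{-\beta}\ra+\infty$ appears in $\L V(x,i)$, while mode $i$'s own singular terms cancel exactly and $V(x,i)\thicksim-\log x$, so $\L V(x,i)\lt K(1+V(x,i))$ fails. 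The clean repair when no critical mode is present is to use $V(x)=x^\alpha+x^{-\beta}$ for \emph{all} modes: it works at both ends for every $\theta(i)>1/2$, and also for $\theta(i)=1/2$ provided $2a(i)>\sigma^2(i)$ and $\beta\leqslant 2a(i)/\sigma^2(i)-1$. In fairness, the truly mixed configuration (a critical $\theta=1/2$ mode coexisting with a $\theta\geqslant1$ mode) also defeats the paper's own proof, which uses a single $p$ but needs $p>1$ for the former and $p\in(0,1)$ for the latter; on that configuration you are at parity with the paper, but within the cases the paper's argument does cover, your write-up needs the mode-independent choice of $V$ just described.
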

\begin{rem}  If $\#\M=1$($\M$ has only one state), that is, there is no regime switching in  equation \eqref{w1}, Mao et al. \cite{mao-b} investigated the global existence of positive solution to it for the case of  $\theta\geqslant1/2$. For the regime switching equation \eqref{w1} with $\theta(r_t)\equiv\theta$, Mao et al. \cite{mao0} proved the existence of global positive solution to it for the case of $\theta\in[1/2,1]$. For the case of $\#\M>1$, that is, the regime switching  equation \eqref{w1}, Theorem \ref{Th1} presents the same results for $\theta(r_t)\in[1/2,1]$. Moreover, one can see that Theorem \ref{Th1} extends the existence results to the case of $\theta(r_t)>1$. For the case of $\theta(r_t)\in(0,1/2)$, pathwise uniqueness  does not hold, see Girsanov  \cite{Gir}; and for $\theta(r_t)\equiv0,$ the solution of equation \eqref{w1} can be expressed explicitly, which shows that it can be negative.
\end{rem}
\begin{proof} For the case of $\theta\in[1/2,1)$, the coefficients of equation \eqref{w1} obey H$\ddot{o}$lder continuity and  linear growth condition on $(0,+\infty)$; and for the case of $\theta\geqslant1$, the coefficients of equation \eqref{w1}  are locally Lipschitz continuous on $(0,+\infty)$. Thus  for any given initial value  $(x_0,i_0)\in \R^+\times\M$ there is a unique maximal local solution $X_t(x_0,i_0)$ on $[0,\tau_e)$, where $\tau_e$ is the explosion time, see Ikeda \cite{Ikeda-81} for more details on the uniqueness of the solution to stochastic differential equation with  H$\ddot{o}$lder continuous coefficients.
\par Now we show that the solution is globally existent and positive on $\R^+$ a.s..  Let $k_0>0$ be sufficiently large such that $x_0\in[1/k_0, k_0]$, and for each integer $k\geqslant k_0$ define the stopping time
$\tau_k=\inf\{t\in[0,\tau_e)|\; X_t(x_0,i_0)\not\in (1/k,k)\},$ where $\tau_\infty=\lim _{k\ra \infty}\tau_k$. It is obvious that $\tau_\infty\leqslant\tau_e$. So we only need to show that $\tau_\infty=\infty$ a.s., which yields the positiveness of the solution almost surely and the existence of global solution of equation \eqref{w1}. If it is false, then there is a pair of constants $T>0$ and $\epsilon\in(0,1)$ such that
$P\{\tau_\infty\leqslant T\}>\epsilon,$
which yields that there exists an integer $k_1\geqslant k_0$ such that
\begin{equation}\label{n1}
P\{\tau_k\leqslant T\}\geqslant\epsilon\;
\; \mbox{for all}\;\; k\geqslant k_1.
\end{equation}
\par
Let $p>0$, and
Define a $C^2$-function $V: \R^+\times\M\ra\R^+$ by
\[ V(x,i)=x^p-1-p\log x,\]
 then  $V(x,i)\geqslant0$ for all $x>0$ and $i\in M$. In fact, $u-1-\ln u\geqslant0$ for $u>0$.
\[\fr{\partial V}{\partial x}(x,i)=px^{p-1}-px^{-1};\; \fr{\partial^2 V}{\partial x^2}(x,i)=p(p-1)x^{p-2}+px^{-2}.\]
By It$\hat{o}$ formula, we have
\[\aligned
\d V(x,i)
&=(px^{p-1}-px^{-1})[(a(i)-b(i)x)dt+\sigma x^{\theta(i)}dB_t]+0.5 p\sigma^2(i)[(p-1)x^{p-2}+x^{-2}] x^{2\theta(i)}dt\\
&=pF(x)dt+\sigma(i) p(x^{p+\theta(i)-1}-x^{\theta(i)-1})dB_t,
\endaligned
\]
where $F(x)=a(i)(x^{p-1}-x^{-1})+b(i)(1-x^{p})+0.5\sigma^2(i)[(p-1)x^{p+2\theta(i)-2}+x^{2\theta(i)-2}]$, which is continuous on $x\in \R^+.$
\par Then we claim that the function $F$ is bounded above on $x\in \R^+$. In fact, for the case of $\theta(i)=0.5$, we take $p>1$ and obtain
 \[\aligned
 F(x)&=a(i)(x^{p-1}-x^{-1})+b(i)(1-x^{p})+0.5\sigma^2(i)[(p-1)x^{p-1}+x^{-1}]\\
 &=-[a(i)-0.5\sigma^2(i)]x^{-1}+[0.5\sigma^2(i)(p-1)+a(i)]x^{p-1}+b(i)(1-x^{p}),
 \endaligned
 \]
 and the higher power of $x$ is $p$ and the lower power of $x$ is $-1$, which together with $a(i)\geqslant0.5\sigma^2(i)$ yields
  \[\aligned
  &F(x)\thicksim -b(i)x^{p}\ra -\infty \;\; \mbox{as} \;x\ra +\infty,\;\; \mbox{and} \;\;F(x)\thicksim -a(i)x^{-1}\ra -\infty \;\; \mbox{as}\; x\ra 0^+\;\; \mbox{for}\;\; a(i)>0.5\sigma^2(i)\\
  &\mbox{or}\;\;F(x)\ra  b(i)\;\; \mbox{as}\; x\ra 0^+\;\; \mbox{while}\;\; a(i)=0.5\sigma^2(i).
  \endaligned
  \]
  \par For the case of $\theta(i)\in(0.5,1)$. We note that the higher power and lower power of $x$ in $F(x)$ are $p$  and  $-1$, respectively; and the coefficients of these two terms are negative, which yield
 \[F(x)\thicksim -b(i)x^{p}\ra -\infty \;\; \mbox{as} \;x\ra +\infty\;\; \mbox{and} \;\;F(x)\thicksim -a(i)x^{-1}\ra -\infty \;\; \mbox{as}\; x\ra 0^+.\]
 \par For the case of $\theta(i)\in[1,+\infty)$, that is $2\theta(i)-2\geqslant0.$ We see that the higher power and lower power of $x$ in $F(x)$ are $p+\theta(i)-2$  and  $-1$, respectively. Let $p\in(0,1)$, we get
 \[F(x)\thicksim -0.5\sigma(i)^2(1-p)x^{p+2\theta(i)-2}\ra -\infty \;\; \mbox{as} \;x\ra +\infty\;\; \mbox{and} \;\;F(x)\thicksim -a(i)x^{-1}\ra -\infty \;\; \mbox{as}\; x\ra 0^+.\]
\par Thus the continuation of $F$ in $\R^+$ implies that there must exist a positive constant $K$  such that $F(x)\leqslant K$ for all $x\in \R^+.$ Therefore we obtain
\[\aligned
\d V(x,i)\leqslant pKdt+\sigma(i) p(x^{p+\theta(i)-1}-x^{\theta(i)-1})dB_t.
\endaligned
\]
Integrating both sides of the above inequality from $0$ to $T\wedge\tau_k$, and taking expectations give
\begin{equation}\label{n2}
EV(X_{T\wedge\tau_k},r_{T\wedge\tau_k})\leqslant V(x_0,i_0)+pKE(T\wedge\tau_k).
\end{equation}
Set $\Omega_k=\{\tau_k\leqslant T\}$ for $k\geqslant k_1$, then it follows from \eqref{n1} that $P(\Omega_k)\geqslant\epsilon$. Note that for every $\omega\in \Omega_k,$ there is $X_{\tau_k}(\omega)$ equals either $k$ of $1/k$, and hence
 \[V(X_{\tau_k}(\omega),r_{\tau_k})\geqslant (k^p-1-p\log k)\wedge(k^{-p}-1+p\log k),\]
 which together with \eqref{n2} yields
 \[\infty>V(x_0,i_0)+pKT\geqslant E[I_{\Omega_k}V(X_{\tau_k}(\omega),r_{\tau_k})]\geqslant (k^p-1-p\log k)\wedge(k^{-p}-1+p\log k).\]
 Then by letting $k\ra\infty$, we get a contradiction. Thus $\tau_\infty=\infty$ a.s.
\end{proof}

\section{$p$th Moment Estimation}
In this section, we will give the $p$th moment estimation of  the solution to equation (\ref{w1}), and then present some sufficient conditions for asymptotic boundedness in $p$th moment of the solution $X_t(x_0, i_0)$.
\begin{Def}(Mao and Yuan (2006)\cite{mao-06})\par
A square matrix $A$ is called a nonsingular M-matrix if $A$ can be expressed in the form $A=sI-G$ with some $G\geqslant0$ (that is  each element of $G$ is non-nagative) and $s>\rho(G)$, where $I$ is the identity matrix and $\rho(G)$ the spectral radius of $G$.
\end{Def}

Corresponding to the infinitesimal generator $Q$  and $p \gt1$, we define an $m¡Á\times m$ matrix
\[\A(p):=p\,\text{diag}(b(1),\cdots,b(m))-Q.\]
\begin{lem}(Mao and Yuan (2006)\cite{mao-06})  \label{lem}\par
If $\A(p)$ is a nonsingular $M$-matrix, then there is a vector $(\beta_1,\cdots,\beta_m)^T$ with $\beta_i>0$ such that $
\mu_i:=pb(i)\beta_i-\sum_{k\in\M}q_{ik}\beta_k>0$ for all $i\in\M.
$
\end{lem}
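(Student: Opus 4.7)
The key observation is that the conclusion can be rewritten in matrix form: if we let $\mathbf{\beta} = (\beta_1,\dots,\beta_m)^T$, then
\[
\mu_i = pb(i)\beta_i - \sum_{k\in\M} q_{ik}\beta_k = \bigl(\A(p)\mathbf{\beta}\bigr)_i,
\]
so what we need to prove is exactly the existence of a strictly positive vector $\mathbf{\beta}>0$ with $\A(p)\mathbf{\beta}>0$ componentwise. This is one of the standard equivalent characterizations of a nonsingular M-matrix, so the plan is to derive it from the specific characterization I will use, namely that a nonsingular M-matrix has an entrywise nonnegative inverse.

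First I would invoke the hypothesis: since $\A(p)$ is a nonsingular M-matrix, by the definition in the excerpt it can be written $\A(p) = sI - G$ with $G\geqslant 0$ and $s>\rho(G)$. A Neumann series expansion then gives
\[
\A(p)^{-1} = s^{-1}\sum_{n=0}^{\infty} (G/s)^n \geqslant 0,
\]
so $\A(p)^{-1}$ exists and all its entries are nonnegative. Let $\mathbf{1}=(1,\dots,1)^T$ and define $\mathbf{\beta} := \A(p)^{-1}\mathbf{1}$. Then by construction $\A(p)\mathbf{\beta} = \mathbf{1}$, which immediately gives $\mu_i=1>0$ for every $i\in\M$, so the only thing left to check is the strict positivity $\beta_i>0$.

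The strict positivity is the one point that needs a small argument and is the main (mild) obstacle, because nonnegativity of $\A(p)^{-1}$ a priori only yields $\mathbf{\beta}\geqslant 0$. To upgrade this to $\mathbf{\beta}>0$ I would use the Z-matrix structure of $\A(p)$: its diagonal entries are $pb(i)-q_{ii}>0$ and its off-diagonal entries are $-q_{ij}\leqslant 0$ (because $q_{ij}\geqslant 0$ for $i\neq j$). Suppose for contradiction that $\beta_{i_0}=0$ for some index $i_0$. Then
\[
1 = \bigl(\A(p)\mathbf{\beta}\bigr)_{i_0} = (pb(i_0)-q_{i_0 i_0})\cdot 0 \;+\; \sum_{k\neq i_0}(-q_{i_0 k})\beta_k \;\leqslant\; 0,
\]
since every summand is a product of a nonpositive number and a nonnegative number. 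This contradiction forces $\beta_i>0$ for all $i\in\M$, completing the proof.
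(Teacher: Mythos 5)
Your proof is correct. Note first that the paper itself offers no proof of this lemma: it is quoted verbatim from Mao and Yuan (2006), so there is nothing internal to compare against; what you have done is supply the standard argument that the cited reference relies on, namely the equivalence, for Z-matrices, between being a nonsingular M-matrix and semipositivity (existence of $\beta>0$ with $\A(p)\beta>0$). Each step checks out: the Neumann expansion $\A(p)^{-1}=s^{-1}\sum_{n\geqslant0}(G/s)^n\geqslant0$ is valid because $\rho(G/s)<1$; the choice $\beta=\A(p)^{-1}\mathbf{1}$ gives $\mu_i=1>0$ exactly because $(\A(p))_{ik}=pb(i)\delta_{ik}-q_{ik}$; and your contradiction argument for strict positivity correctly exploits the sign pattern $-q_{i_0k}\leqslant0$ off the diagonal. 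One small simplification you could make: strict positivity of $\beta$ needs no separate contradiction argument, since the $n=0$ term of the Neumann series already gives
\[
\beta=s^{-1}\sum_{n=0}^{\infty}(G/s)^n\mathbf{1}\;\geqslant\; s^{-1}\mathbf{1}\;>\;0,
\]
all the remaining terms being entrywise nonnegative. Either way, your argument is complete and self-contained, which is arguably more than the paper provides for this statement.
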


\begin{theo}\label{th3.1}
 If
 one of the following conditions holds:
\par  1) $\A(1)$ is a nonsingular $M$-matrix;
 \par  2) $\forall i\in\M$, $\theta(i)\in[0,1]$, and $\A(p)$ is a nonsingular $M$-matrix with $p>\max\{1,2[1-\min_{i\in\M}\theta(i)]\}$.
 \\ Then the $p$th moment of the solution to equation \eqref{w1} has the following property,
\[
\E[X_{t}^p(x_0,i_0)]\lt  x_0^p e^{-\lambda_p \,t} +\fr {C_p}{\hat\beta\lambda_p}(1-e^{-\lambda_p t})\;\;\mbox{ for any } (x_0, i_0)\in\R^+\times\M,
\]
 where $C_p=\max_{i\in\M}\left\{\fr1p(p\beta_ia(i))^p\biggl(\frac{3}{\mu_i}\biggr)^{p-1}
+\fr{2-2\theta(i)}p\biggl(\frac12p(p-1)\sigma^2(i)\beta_i\biggr)^{p/[2-2\theta(i)]}\biggl(\frac{3}{\mu_i}\biggr)^{[p-2+2\theta(i)]/[2-2\theta(i)]}\right\}$, \par\quad$\;\;\,\hat\beta=\min_{i\in\M}{\beta_i}$ and $\lambda_p=\left\{
    \begin{array}{ll}
      \min_{i\in\M}\fr{[p+3-2\theta(i)]\mu_i}{3p\beta_i}, & p>1; \\
     \min_{i\in\M}\fr{\mu_i}{\beta_i}, & p=1.
    \end{array}
  \right.$
\par
Moreover, the solution of equation \eqref{w1} is asymptotic boundedness in  $p$th moment with
\[
\limsup_{t\ra\infty}\E[X_t^p(x_0, i_0)]\lt \fr{C_p}{\hat \beta\lambda_p}
\]and
the Lyapunov exponent
\[\limsup_{t\ra \infty}\fr1t\log \E[X_t^p(x_0,i_0)]\leqslant0,\;\;\mbox{for all}\; (x_0, i_0)\in \R^+\times\M.\]
\end{theo}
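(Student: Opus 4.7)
The plan is a Lyapunov / Dynkin argument built on Lemma \ref{lem}. Under either hypothesis the lemma supplies positive weights $\beta_1,\dots,\beta_m$ with $\mu_i := p b(i)\beta_i - \sum_{k\in\M} q_{ik}\beta_k > 0$. Setting $V(x,i):=\beta_i x^p$ and computing the generator directly (using $\partial_x V = p\beta_i x^{p-1}$, $\partial_{xx} V = p(p-1)\beta_i x^{p-2}$, and $\sum_k q_{ik}\beta_k - p b(i)\beta_i = -\mu_i$) yields
\[
\L V(x,i) = -\mu_i\, x^p + p\beta_i a(i)\, x^{p-1} + \tfrac12 p(p-1)\sigma^2(i)\beta_i\, x^{p+2\theta(i)-2}.
\]

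When $p=1$ the diffusion contribution vanishes and $\L V(x,i)\leqslant \beta_i a(i) - \mu_i x \leqslant C_1 - \lambda_1 V(x,i)$ follows immediately. When $p>1$ under condition~2, the constraint $p>2[1-\min_i\theta(i)]$ together with $\theta(i)\leqslant 1$ forces $0<p+2\theta(i)-2\leqslant p$, so both $x^{p-1}$ and $x^{p+2\theta(i)-2}$ are (sub-)$p$-powers of $x$. I apply Young's inequality in the form $y^\alpha \leqslant \alpha\delta y + (1-\alpha)\delta^{-\alpha/(1-\alpha)}$ with $y=x^p$ and exponents $\alpha=(p-1)/p$ and $\alpha=(p+2\theta(i)-2)/p$, tuning $\delta$ in each case to $\mu_i/[3p\beta_i a(i)]$ and $\mu_i/[\tfrac32 p(p-1)\sigma^2(i)\beta_i]$ respectively. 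These choices make the $x^p$-contributions fed back to the negative term exactly $(p-1)\mu_i/(3p)$ and $(p+2\theta(i)-2)\mu_i/(3p)$, while the residual constants assemble into the two summands defining $C_p$. The net coefficient of $-x^p$ becomes $[p+3-2\theta(i)]\mu_i/(3p)\geqslant \lambda_p\beta_i$, and in both cases I arrive at the crucial Lyapunov estimate
\[
\L V(x,i)\leqslant -\lambda_p V(x,i) + C_p.
\]

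From here I apply It\^o's formula to $e^{\lambda_p t}V(X_t,r_t)$, localize with the stopping times $\tau_k$ from the proof of Theorem~\ref{Th1} so that the stochastic integral is a true martingale, and take expectations to obtain
\[
\E\bigl[e^{\lambda_p(t\wedge\tau_k)} V(X_{t\wedge\tau_k},r_{t\wedge\tau_k})\bigr] \leqslant V(x_0,i_0) + \frac{C_p}{\lambda_p}\bigl(e^{\lambda_p t}-1\bigr).
\]
Since $\tau_k\uparrow\infty$ a.s.\ by Theorem~\ref{Th1}, Fatou's lemma on the left removes the stopping. Dividing by $\hat\beta\,e^{\lambda_p t}$ and using $V(x_0,i_0)=\beta_{i_0} x_0^p$ delivers the stated $p$th-moment inequality; sending $t\to\infty$ yields the asymptotic boundedness, and taking $t^{-1}\log$ of both sides gives the Lyapunov-exponent bound.

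The main obstacle will be the bookkeeping in the Young-inequality step: the two splitting parameters must be chosen so that the three $x^p$-coefficients aggregate into the sharp combination $[p+3-2\theta(i)]/(3p)$ while, simultaneously, the residual constants reproduce the somewhat intricate closed form of $C_p$. A secondary technical point is the localization passage, whose limit is justified precisely because Theorem~\ref{Th1} rules out explosion; without that, the stochastic-integral martingale trick would not close the bound.
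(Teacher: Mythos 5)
Your proposal follows essentially the same route as the paper's own proof: the same Lyapunov function $V(x,i)=\beta_i x^p$ built from Lemma \ref{lem}, the same generator bound $\L V\leqslant-\mu_i x^p+p\beta_i a(i)x^{p-1}+\frac12 p(p-1)\sigma^2(i)\beta_i x^{p+2\theta(i)-2}$, the same weighted Young/AM--GM splitting tuned so that each absorbed term costs exactly a third of $\mu_i$ times the appropriate exponent ratio (reproducing $C_p$ and $\lambda_p$), and the same exponentially weighted It\^o--localization--limit argument for the moment bound. Your write-up is in fact slightly more careful than the paper's at the final passage (invoking Fatou's lemma explicitly to remove the stopping times), so it is correct in the same sense and to the same extent as the original.
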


\begin{proof}
Lemma \ref{lem} yields that there is a vector $(\beta_1,\cdots,\beta_m)^T$ with $\beta_i>0$ such that
\[
\mu_i:=pb(i)\beta_i-\sum_{k\in\M}q_{ik}\beta_k>0$ for all $i\in\M.
\]
Define the $C^2$-function $V: \R^+\times\M\ra\R^+$ by the form:
\[V(x, i)=\beta_ix^p,\]
then $\fr{\partial V}{\partial x}(x,i)=p\beta_ix^{p-1}$ and $\fr{\partial^2 V}{\partial x^2}(x,i)=p(p-1)\beta_ix^{p-2}$, and we get
\begin{equation}\label{c22}
\aligned
\L V(x, i)=&\,p\,\beta_i[a(i)-b(i)x]x^{p-1}+\frac12p(p-1)\sigma^2(i)\beta_ix^{p+2\theta(i)-2}+\sum_{k\in\M}q_{ik}\beta_kx^p\\
=&\,-\bl(p\,b(i)\beta_i-\sum_{k\in\M}q_{ik}\beta_k\br)x^p+p\beta_ia(i)x^{p-1}+\frac12p(p-1)\sigma^2(i)\beta_ix^{p+2\theta(i)-2}\\
\lt &-\mu_ix^p+ p\,\beta_ia(i)x^{p-1}+\frac12p(p-1)\sigma^2(i)\beta_ix^{p+2\theta(i)-2}\;\;\mbox{ for all}\;\; (x, i)\in \R^+\times\M.
\endaligned
\end{equation}
By the elementary inequality
\[
a^\gamma b^{1-\gamma}\lt a \gamma+b (1-\gamma), \;\;\forall \, a, b\geqslant0, \gamma\in[0,1],
\]
 we obtain
 \begin{equation}\label{c2}
\aligned
 p\,\beta_ia(i)x^{p-1}= &p\,\beta_ia(i)\biggl(\frac{3}{\mu_i}\biggr)^{(p-1)/p}\biggl(\frac{\mu_i}{3}x^p\biggr)^{(p-1)/p}\\
 =&\left[(p\,\beta_ia(i))^p\biggl(\frac{3}{\mu_i}\biggr)^{(p-1)}\right]^{1/p}\biggl(\frac{\mu_i}{3}x^p\biggr)^{(p-1)/p}\\
 \lt& \fr1p(p\,\beta_ia(i))^p\biggl(\frac{3}{\mu_i}\biggr)^{(p-1)}+\fr{p-1}{3p}\mu_i x^p
 \endaligned
\end{equation}
and
 \begin{equation}\label{c3}
\aligned
 &\frac12p(p-1)\sigma^2(i)\beta_ix^{p+2\theta(i)-2}\\
 =&\frac12p(p-1)\sigma^2(i)\beta_i\biggl(\frac{3}{\mu_i}\biggr)^{[p-2+2\theta(i)]/p}\biggl(\frac{\mu_i}{3}x^p\biggr)^{[p-2+2\theta(i)]/p}
 \\
 =&\left[\biggl(\frac12p(p-1)\sigma^2(i)\beta_i\biggr)^{p/[2-2\theta(i)]}\biggl(\frac{3}{\mu_i}\biggr)^{[p-2+2\theta(i)]/[2-2\theta(i)]}\right]^{[2-2\theta(i)]/p}\biggl(\frac{\mu_i}{3}x^p\biggr)^{[p-2+2\theta(i)]/p}\\
 \lt& \fr{2-2\theta(i)}p\biggl(\frac12p(p-1)\sigma^2(i)\beta_i\biggr)^{p/[2-2\theta(i)]}\biggl(\frac{3}{\mu_i}\biggr)^{[p-2+2\theta(i)]/[2-2\theta(i)]}+\fr{p-2+2\theta(i)}{3p}\mu_i x^p.
 \endaligned
\end{equation}
Substituting  \eqref{c2} and \eqref{c3} into \eqref{c22} gives
\begin{equation}\label{c23}
\aligned
\L V(x, i)
\lt &-\mu_ix^p+\fr{p-1}{3p}\mu_i x^p+\fr{p-2+2\theta(i)}{3p}\mu_i x^p+ \fr1p(p\beta_ia(i))^p\biggl(\frac{3}{\mu_i}\biggr)^{(p-1)}
\\
\quad&+\fr{2-2\theta(i)}p\biggl(\frac12p(p-1)\sigma^2(i)\beta_i\biggr)^{p/[2-2\theta(i)]}\biggl(\frac{3}{\mu_i}\biggr)^{[p-2+2\theta(i)]/[2-2\theta(i)]}\\
\lt  &-\lambda_p V(x,i)+C_p\;\; \mbox{for all}\;  (x, i)\in \R^+\times\M,
\endaligned
\end{equation}

Define the stopping time $\tau_n=\inf\{t\gt0|\,X_t\gt n\}$, it is obvious that $\tau_n\ra\infty$ as $n\ra\infty$.  By applying It$\hat{\text{o}}$'s
formula, we have
\[
\E[e^{\lambda_p \,t_n}V(X_{t_n},r_{t_n})]=V(x_0, i_0)+\E\int_0^{t_n}e^{\lambda_p s}\L V(X_s,  r_s)ds+\lambda_p\E\int_0^{t_n}e^{\lambda_p s}V(X_s, r_s)ds,
\]
where $t_n=\tau_n\wedge t.$ It follows from  \eqref{c23}  that
\[
\E[e^{\lambda_p\,t_n}V(X_{t_n}, r_t)]\lt V(x_0, i_0)+C_p\int_0^{t_n}e^{\lambda_p s}ds\lt  V(x_0, i_0)+\fr{C_p}{\lambda_p}(e^{\lambda_p t_n}-1),
\]
and thus
\[
\hat \beta  \E[X_{t_n}^p(x_0,i_0)]\lt  \beta_{i_0}x_0^pe^{ -\lambda_p\,t_n}+\fr{C_p}{\lambda_p}(1-e^{-\lambda_p t_n}).
\]
Letting $n\ra\infty$, we get
\[
\E[X_{t}^p(x_0,i_0)]\lt  x_0^p e^{-\lambda_p \,t} +\fr {C_p}{\hat\beta\lambda_p}(1-e^{-\lambda_p t}),\;\;\mbox{for any}\;\; (x_0, i_0)\in\R^+\times\M.
\]
 Thus
$\limsup_{t\ra\infty}\E[X_t^p(x_0,i_0)]\lt \fr{C_p}{\hat\beta\lambda_p},$ which gives
 \[\limsup_{t\ra \infty}\fr1t\log \E[X_t^p(x_0,i_0)]\leqslant0\;\; \mbox{for all}\;\; (x_0, i_0)\in \R^+\times\M.\]
\end{proof}

\vspace{2mm}
\section{Stationary distribution}
In this section, we assume that $q_{ij}>0$ for $i\not=j$, and the discrete component $(r_t)_{t\gt0} $ in equation \eqref{w1} is an irreducible
continuous-time Markov chain with an invariant distribution $\pi = (\pi_1, \pi_2,...,\pi_m)$.

\begin{lem}(See Zhu and Yin (2007)\cite{zhu-07}) \label{mm}
\par
  If there is a bounded open
subset $D\subset\R^+$ and for each $i\in \M$ there exists a nonnegative function
$V(\cdot,i): D^c \rightarrow \R $ such that $V(\cdot,i)$ is twice continuously differentiable
and for some $\xi >0$,
$\L V(x,i)\leqslant -\xi\; \mbox{ for any } \; (x, i)\in D^c \times\M,$
then equation (\ref{w1}) is positive recurrence.
 Moreover, the  process $X_t(x,i)$ has a unique ergodic stationary
distribution $\nu$. That is, if $f$ is a function integrable with respect to the
measure $\nu$, then
\[P\bigg(\lim_{t\ra\infty}\fr1t\int_0^tf(x(s))ds=\int_0^\infty f(x)\nu(dx)\bigg)= 1.\]
\end{lem}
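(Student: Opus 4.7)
The plan is to follow the classical Foster--Khasminskii--type Lyapunov approach, adapted to the hybrid switching diffusion $(X_t,r_t)$ on the state space $\R^+\times\M$. Positive recurrence should mean that, starting from any $(x,i)\in D^c\times\M$, the process returns to the bounded set $D\times\M$ in finite expected time; combined with irreducibility and the strong Feller property coming from the nondegenerate Brownian noise and from $q_{ij}>0$ for $i\ne j$, this will deliver existence and uniqueness of the invariant probability measure $\nu$ and the ergodic identity.

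First I would fix $(x,i)\in D^c\times\M$, define the hitting time $\tau=\inf\{t\gt 0:X_t\in D\}$, and localise by $\tau_n=\inf\{t\gt 0:|X_t|\gt n\}$ so that the martingale part of It\^o's formula has mean zero. Applying It\^o's formula to the $C^2$ function $V(\cdot,i)$ along $(X_t,r_t)$ gives, by Dynkin's formula,
\[
\E_{(x,i)}\bl[V(X_{t\wedge\tau\wedge\tau_n},r_{t\wedge\tau\wedge\tau_n})\br]
=V(x,i)+\E_{(x,i)}\!\int_0^{t\wedge\tau\wedge\tau_n}\!\L V(X_s,r_s)\,ds.
\]
On $\{s<\tau\}$ we have $X_s\in D^c$, so $\L V(X_s,r_s)\lt -\xi$ by hypothesis; using $V\gt 0$ and rearranging gives $\xi\,\E_{(x,i)}[t\wedge\tau\wedge\tau_n]\lt V(x,i)$. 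Sending first $n\to\infty$ (the explosion time is $\infty$ a.s., which in the present paper is Theorem \ref{Th1}) and then $t\to\infty$ via monotone convergence yields $\E_{(x,i)}[\tau]\lt V(x,i)/\xi<\infty$. Thus every point in $D^c\times\M$ has finite mean return time to $D\times\M$.

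From this finite-mean-hitting-time bound I would then invoke the standard machinery for hybrid switching diffusions: because $\sigma(i)>0$ and $(r_t)$ is irreducible with $q_{ij}>0$, the process $(X_t,r_t)$ is $\psi$-irreducible and every compact product set is petite, so a finite-mean-return-time Foster criterion implies Harris positive recurrence. Meyn--Tweedie-type theory (or directly the results in Zhu--Yin) then gives a unique invariant probability measure $\nu$ on $\R^+\times\M$ and, for every $\nu$-integrable $f$, the almost-sure ergodic limit $\lim_{t\to\infty}\fr1t\int_0^t f(X_s,r_s)\,ds=\int f\,d\nu$, which on projecting to the $X$-coordinate gives the stated identity.

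The main obstacle is the passage from ``finite expected return time to a bounded set'' to ``positive Harris recurrence with a unique ergodic invariant measure,'' since on the noncompact state space $\R^+\times\M$ one must check that $D\times\M$ is petite and that the semigroup is $\psi$-irreducible and strong Feller. For switching diffusions this relies on the Hörmander-type hypoellipticity of each $\L(\cdot,i)$ together with the full-coupling provided by the irreducible generator $Q$; verifying these points carefully is precisely what makes the result of Zhu--Yin nontrivial, and is why the authors here simply quote it.
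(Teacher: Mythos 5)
The paper offers no proof of this lemma at all: it is quoted as a black-box result from Zhu and Yin (2007) and then applied in Theorems~\ref{sdt} and~\ref{th4.1}. So there is no in-paper argument to compare yours against; what you have written is a road map of the proof in the cited reference, and it does follow the standard Foster--Khasminskii route (Dynkin's formula plus the drift condition to get finite mean hitting time of $D$, then irreducibility/petiteness/Feller machinery for existence, uniqueness and ergodicity of the invariant measure). Note, however, that your proposal, by your own admission, defers exactly the nontrivial half --- $\psi$-irreducibility, petiteness of compact sets, and the passage from finite mean return times to a unique ergodic invariant measure --- back to Zhu--Yin or Meyn--Tweedie theory, so it is a correct outline rather than a self-contained proof; that is the same epistemic status as the paper's citation.

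There is one concrete slip in the part you do carry out. Your localization $\tau_n=\inf\{t\geqslant 0 : |X_t|>n\}$ caps the process only from above, but the state space is $\R^+$ and $D^c$ accumulates at $0$: if $D=(1/N,N)$ then $D^c=(0,1/N]\cup[N,\infty)$, which is not compact near the origin. Since $V(\cdot,i)$ is merely assumed $C^2$ and nonnegative on $D^c$, its derivative need not be bounded as $x\ra 0^+$ --- and in the paper's own applications it is not, since there $V(x,i)=(\gamma-p\xi_i)x^{-p}+x$ blows up at $0$. Consequently the stochastic integral $\int_0^{t\wedge\tau\wedge\tau_n} \sigma(r_s)X_s^{\theta(r_s)}\frac{\partial V}{\partial x}(X_s,r_s)\,dB_s$ need not be a true martingale under your stopping, and the Dynkin identity you wrote is not justified. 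The fix is standard: use the two-sided localization $\tau_n=\inf\{t\geqslant 0: X_t\notin(1/n,n)\}$, on which the integrand is bounded; then $\tau_n\ra\infty$ a.s.\ because Theorem~\ref{Th1} gives not only non-explosion but also strict positivity of $X_t$ for all time, and the rest of your argument (Fatou in $n$, then monotone convergence in $t$, yielding $\E_{(x,i)}[\tau]\leqslant V(x,i)/\xi$) goes through unchanged.
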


\begin{theo}\label{sdt}
If   $2\theta(i)\in (1, 3]$ for all $i\in\M$,  then for any $(x_0,i_0)\in \R^+\times\M,$  equation (\ref{w1}) is positive recurrence and the process $X_t(x_0,i_0)$ admits a unique ergodic  stationary distribution $\nu.$
\end{theo}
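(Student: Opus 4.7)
The plan is to apply the Foster--Lyapunov criterion of Lemma \ref{mm}: I must exhibit a nonnegative $C^2$ test function $V$ on $\R^+$ and a bounded open interval $D \subset \R^+$ for which $\L V(x,i) \leq -\xi$ holds uniformly on $D^c \times \M$. The $p$th moment function $\beta_i x^p$ of Section 3 controls the behaviour at infinity but does nothing near $0$, so I would supplement it with a negative power and work with the $i$-independent Lyapunov function
\[
V(x) = x^p + x^{-q},
\]
with $p \in (0,1)$ and $q > 1$ to be fixed. Since $V$ does not depend on $i$ and the rows of $Q$ sum to zero, the switching contribution $\sum_{k}q_{ik}V(x)$ vanishes, and a direct computation gives
\[
\L V(x,i) = -pb(i)x^p + pa(i)x^{p-1} + \tfrac12 p(p-1)\sigma^2(i)x^{p+2\theta(i)-2} + qb(i)x^{-q} - qa(i)x^{-q-1} + \tfrac12 q(q+1)\sigma^2(i)x^{2\theta(i)-q-2}.
\]

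I would then read off the two tail asymptotics under $\theta(i) \in (1/2, 3/2]$. As $x \to 0^+$ the most singular term is $-qa(i)x^{-q-1}$; for its negativity to prevail over the positive singular term $\tfrac12 q(q+1)\sigma^2(i)x^{2\theta(i)-q-2}$ it suffices that $-q-1 < 2\theta(i)-q-2$, i.e.\ $\theta(i) > 1/2$, which is precisely the hypothesis. Since the remaining terms have exponents at least $p-1 > -1 > -q-1$ and are therefore of strictly smaller order, $\L V(x,i) \to -\infty$ as $x\to 0^+$, uniformly in $i\in\M$ (finite). As $x \to \infty$, if $\theta(i) \in (1/2,1)$ the leading term is $-pb(i)x^p$; if $\theta(i) = 1$ the $x^p$ coefficient equals $-pb(i)+\tfrac12 p(p-1)\sigma^2(i)<0$; and if $\theta(i) \in (1, 3/2]$ the leading term is $\tfrac12 p(p-1)\sigma^2(i)x^{p+2\theta(i)-2}$, whose coefficient is negative because $p\in(0,1)$. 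In every case $\L V(x,i) \to -\infty$ uniformly in $i$. Hence there exist $0 < \alpha < \beta < \infty$ with $\L V(x,i) \leq -1$ on $\bigl((0,\alpha]\cup[\beta,\infty)\bigr)\times\M$, and Lemma \ref{mm} with $D = (\alpha,\beta)$ yields both the positive recurrence of $(X_t, r_t)$ and the uniqueness and ergodicity of the stationary distribution $\nu$.

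The main subtlety is the design of $V$: a pure polynomial $x^p$ (as in Section 3) cannot exclude the process from approaching $0$, so the additive term $x^{-q}$ is essential, and the interplay between the drift singularity $-qa(i)x^{-q-1}$ and the diffusion singularity $\tfrac12 q(q+1)\sigma^2(i)x^{2\theta(i)-q-2}$ is exactly what fixes the endpoint $2\theta(i)>1$ as the boundary of the method; upper bound $2\theta(i)\le 3$ is not used for the small-$x$ estimate but is consistent with the power counting near infinity once $p\in(0,1)$ is chosen. No M-matrix hypothesis on $Q$ is needed, because the choice of an $i$-independent $V$ makes the Markov-chain cross-term vanish, and the finiteness of $\M$ converts pointwise asymptotic negativity of $\L V$ into the uniform negativity required by Lemma \ref{mm}.
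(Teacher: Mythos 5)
Your proof is correct, and it rests on the same key lemma as the paper (the Foster--Lyapunov criterion of Lemma \ref{mm}) and the same overall strategy of a test function blowing up at both ends of $\R^+$ with power counting of $\L V$; but your test function is genuinely different and in two respects better. The paper takes $V(x,i)=(\gamma-p\xi_i)x^{-p}+x$, where $\xi$ solves the Poisson system $Q\xi=-\mu+\sum_{i\in\M}\pi_i\mu(i)\mathbf{1}$ associated with the chain; this makes $V$ state-dependent, requires the irreducibility machinery to produce $\xi$, and, since the linear term $x$ has vanishing second derivative, leaves the \emph{positive} diffusion contribution $\tfrac12 p(p+1)(\gamma-p\xi_i)\sigma^2(i)x^{2\theta(i)-p-2}$ to be dominated at infinity by the drift term $-b(i)x$ --- which is exactly where the paper uses the upper bound $2\theta(i)\leqslant 3$. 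Your choice $V(x)=x^p+x^{-q}$ with $p\in(0,1)$, $q>1$ makes the switching term vanish identically, dispenses with the Poisson system altogether, and, because $p(p-1)<0$, turns the leading diffusion contribution at infinity into a \emph{negative} term $\tfrac12 p(p-1)\sigma^2(i)x^{p+2\theta(i)-2}$; as a consequence your argument actually proves the conclusion for all $\theta(i)>1/2$ with no upper bound, strictly extending the stated theorem (as you note). Near zero both proofs use the hypothesis $2\theta(i)>1$ in the identical way: the drift singularity of exponent one less than the negative power beats the diffusion singularity. One small slip in your bookkeeping: among the ``remaining terms'' near $x=0^+$ you assert all exponents are at least $p-1$, but $qb(i)x^{-q}$ has exponent $-q<p-1$ and is positive and singular; since $-q>-q-1$ it is nonetheless of strictly smaller order than the dominant negative term $-qa(i)x^{-q-1}$, so the conclusion is unaffected.
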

\begin{proof}
Define a $C^2$-function $V: \R^+\times \M\ra \R^+$ by the  form:
\[
V(x,i)=(\gamma-p\xi_i)x^{-p}+x,
\]
where $p, \gamma$ are positive constants satisfying $\min_{i\in\M}\{\gamma-p\,\xi_i\}>0$ where $\xi=(\xi_1,\cdots,\xi_m)^T$ is a solution of the following Poisson system,
\begin{equation}\label{poissoneq}
Q\xi=-\mu+\sum_{i\in\M}\pi_i\mu(i)\textbf{1},
\end{equation}
where $\mu=(\mu(1),\mu(2),...,\mu(m))$. By applying Ito's formula, we obtain
\[
\aligned
\L V(x,i)=&-pa(i)(\gamma-p\xi_i)x^{-p-1}+pb(i)(\gamma-p\xi_i)x^{-p}+a(i)-b(i)x\\
&+0.5p(1+p)(\gamma-p\xi_i)\sigma^2(i)x^{2\theta(i)-p-2}-px^{-p}\sum_{k\in\M} q_{ik}\xi_k.
\endaligned
\]
Then, it follows from $1<2\theta(i)\leq3$  that
\[
\L V(x,i)\thicksim_{x\ra+\infty} -b(i)x\;\; \mbox{and}
\;\; \L V(x,i)\thicksim_{x\ra\, 0^+} -p\,a(i)(\gamma-p\xi_i)x^{-p-1}.
\]
Set $D=(1/N,N)\subset  \R^+,$ then for sufficiently large $N$ we get
\[
\L V(x,i)\leq -1, \;\;\mbox{for all}\;\; (x,i)\in D^c\times\M.
\]
Then,  Lemma \ref{mm} shows that equation (\ref{w1}) admits a  stationary distribution with nowhere-zero density in $\R^+.$
\end{proof}

\begin{theo}\label{th4.1} If $\theta(i)\equiv1/2$ and $\mu(i):=a(i)-0.5\sigma^2(i)>0$ for all $i\in\M$, then, for any $(x_0,i_0)\in \R^+\times\M,$ the solution $X_t(x_0,i_0)$ of equation (\ref{w1}) is positive recurrence and admits a unique ergodic  stationary  distribution $\nu.$
\end{theo}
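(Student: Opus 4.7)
The plan is to mirror the Lyapunov-function argument used for Theorem \ref{sdt} and appeal to Lemma \ref{mm}. The new situation $\theta(i) \equiv 1/2$ lies just outside the range $2\theta(i) \in (1,3]$ covered there, and the crucial change is that the diffusion contribution of the form $\tfrac12 p(p+1)\sigma^2(i) x^{2\theta(i)-p-2}$ now has exponent $-p-1$, the \emph{same} exponent as the drift term $-pa(i) x^{-p-1}$. Rather than being handled separately, these two pieces merge into a single $x^{-p-1}$ contribution whose sign depends on the Feller-type condition $\mu(i) = a(i) - \tfrac12\sigma^2(i) > 0$.

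Concretely, I would take the $C^2$-function $V(x,i) = x^{-p} + x$, independent of $i$ for simplicity, so that the Markov-chain part $\sum_{k\in\M} q_{ik} V(x,k)$ vanishes by the zero-row-sum property of $Q$. A direct It\^o computation then gives
\begin{align*}
\L V(x,i) = -p\bigl[a(i) - \tfrac12(p+1)\sigma^2(i)\bigr] x^{-p-1} + p b(i) x^{-p} + a(i) - b(i) x.
\end{align*}
Rewriting the bracket as $\mu(i) - \tfrac12 p\sigma^2(i)$ and using $\mu(i) > 0$ together with the finiteness of $\M$, I can fix $p>0$ so small that $\mu(i) - \tfrac12 p\sigma^2(i) \geqslant \tfrac12 \min_{j\in\M}\mu(j) > 0$ uniformly in $i$. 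Consequently the coefficient of $x^{-p-1}$ in $\L V$ is strictly negative, uniformly in $i \in \M$.

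From here the endgame copies the one in Theorem \ref{sdt}: as $x \to 0^+$ the $x^{-p-1}$ term dominates (its exponent is strictly more negative than $-p$), so $\L V(x,i) \to -\infty$; as $x \to \infty$ the $-b(i) x$ term dominates and again $\L V(x,i) \to -\infty$, in both cases uniformly in $i \in \M$. Therefore one can pick $N$ large enough and set $D = (1/N,N) \subset \R^+$ so that $\L V(x,i) \leqslant -1$ for every $(x,i) \in D^c \times \M$, and Lemma \ref{mm} delivers positive recurrence together with the existence of a unique ergodic stationary distribution $\nu$.

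The only real obstacle is arranging the uniform-in-$i$ choice of $p$: I need $0 < p < \min_{i \in \M} 2\mu(i)/\sigma^2(i)$, which is feasible precisely because $\mu(i) > 0$ on the finite set $\M$. Thus the hypothesis $\mu(i) > 0$ is used in exactly one clean place, to keep the merged $x^{-p-1}$ coefficient negative; everything else is the same book-keeping as in the proof of Theorem \ref{sdt}, and one could equally well use the Poisson-equation Lyapunov function $V(x,i) = (\gamma - p\xi_i)x^{-p} + x$ from that proof if strict parallelism with the earlier argument is preferred.
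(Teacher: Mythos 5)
Your proposal is correct, and it is structurally the same argument the paper gives: a Lyapunov function built from the exponents $-p$ and $1$, the observation that for $\theta(i)\equiv 1/2$ the drift and diffusion contributions merge into a single $x^{-p-1}$ term with coefficient proportional to $-p\bigl[\mu(i)-\tfrac12 p\sigma^2(i)\bigr]$, a choice of $p$ small enough (any $0<p<\min_{i\in\M}2\mu(i)/\sigma^2(i)$) to keep that coefficient negative uniformly in $i$, the asymptotics $\L V\to-\infty$ at both ends of $\R^+$, and finally Lemma \ref{mm} on $D=(1/N,N)$.

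The one genuine difference is your choice of the state-independent function $V(x)=x^{-p}+x$ in place of the paper's $V(x,i)=(\gamma-p\xi_i)x^{-p}+x$, where $\xi$ solves the Poisson system \eqref{poissoneq}. Your simplification is valid and arguably cleaner: since $V$ does not depend on $i$, the switching term $\sum_{k\in\M}q_{ik}V(x,k)$ vanishes by the zero row sums of $Q$, and the Poisson system never enters. The paper instead carries along the extra term $-p\,x^{-p}\sum_{k\in\M}q_{ik}\xi_k$, which is harmless --- it is of order $x^{-p}$, dominated by the $x^{-p-1}$ term as $x\to0^+$ and vanishing as $x\to\infty$ --- but also does no work under the stated hypothesis: Poisson-type state-dependent weights would earn their keep only if one wanted to weaken the assumption to an averaged condition such as $\sum_{i\in\M}\pi_i\mu(i)>0$, which the theorem does not do. Since positivity of $\mu(i)$ is assumed for every $i$, your version shows the Poisson machinery can simply be dropped. (Incidentally, your expression for $\L V$ also corrects a typo in the paper's display, which reads $a(i)x-b(i)x$ where it should be $a(i)-b(i)x$; your computation and your uniform choice of $p$ are exactly what the paper's argument requires.)
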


\begin{proof}
 Define the $C^2$-function $V: \R^+\times \M\ra \R^+$ by the following form
\[V(x,i)=(\gamma-p\xi_i)x^{-p}+x,\]
where $p, \gamma$ are positive constants satisfying $\min_{i\in\M}\{\gamma-p\xi_i\}>0$, and $\xi=(\xi_1,\cdots,\xi_m)^T$ is a solution of the Poisson system \eqref{poissoneq}. By using It$\hat{\text{o}}$'s formula, we obtain
\[
\aligned
\L V(x,i)=&-p(\gamma-p\xi_i)x^{-p-1}\left[\mu(i)-0.5p\sigma^2(i)\right]+pb(i)(\gamma-p\xi_i)x^{-p}
+a(i)x-b(i)x-px^{-p}\sum_{k\in\M} q_{ik}\xi_k.
\endaligned
\]
 Meanwhile, $\mu(i)>0$ yields one can choose sufficient small $p$ such that $\mu(i)-0.5p\sigma^2(i)>0$ for all $i\in\M$,
 then
 \[
\L V(x,i)\thicksim_{x\ra+\infty} -b(i)x\;\; \mbox{and}
\;\; \L V(x,i)\thicksim_{x\ra\, 0^+} -p(\gamma-p\xi_i)\left[\mu(i)-0.5p\,\sigma^2(i)\right]x^{-p-1},
\]
which together with Lemma \ref{mm} give the result immediately.
\end{proof}

\vspace{2mm}
\section{Example}
In this section, we will give an example to verify the theorems obtained in previous sections by simulation. The numerical method used here is Milstein¡¯s Higher Order Method, see Higham (2001) \cite{Higham-01} for more details.
 \par Set the state space of Markov chain $(r_t)_{t\geqslant0}$ as $\M=\{1,\,2,\,3,\,4\}$, and its  generator $Q$ as follows
\[Q=\left(
      \begin{array}{cccc}
      -7 & 3 & 2 & 2 \\
        3 & -9 & 3 & 3\\
        3 & 2 & -8 & 3\\
        2 & 3 & 4 & -9\\
      \end{array}
    \right).
   \]
   Then its stationary distribution is
$\pi=( 0.2773,\,   0.2277,\,    0.2681,\,    0.2269).$ Its one step transition probability matrix $P(\delta)$ with $\delta=10^{-4}$ is
   \[
P(\delta)=\exp(\delta  Q)=\left(
      \begin{array}{cccc}
     0.9993  &  0.0003 &   0.0002 &   0.0002\\
    0.0003  &  0.9991 &   0.0003  &  0.0003\\
    0.0003  &  0.0002 &   0.9992  &  0.0003\\
    0.0002  &  0.0003 &   0.0004   & 0.9991\\
      \end{array}
    \right).
 \]

\par Let $a=(4,1.5,0.8,0.55)$, $b=(2,1,1,1)$, $\sigma=(2,1,1,0.4)$, $\theta=(0.5, 0.5, 0.5, 0.5)$ and $p=2$, then the conditions $2a(i)\geqslant\sigma^2(i),(i=1,2,3,4)$ hold.
\[
\A(p)=p\, \mbox{diag}(b_1,b_2,b_3,b_4)-Q=\left(
                                           \begin{array}{cccc}
                                             11 & -3 & -2 & -2 \\
                                             -3 & 11 & -3 & -3 \\
                                             -3 & -2 & 11 & -3 \\
                                             -2 & -3 & -4 & 11 \\
                                           \end{array}
                                         \right)=sI-G,
\mbox{ where } s=11,\;
G=\left(
                                     \begin{array}{cccc}
                                             0 & 3 & 2 & 2 \\
                                             3 & 0 & 3 & 3 \\
                                             3 & 2 & 1 & 3 \\
                                             2 & 3 & 4 & 0 \\
                                           \end{array}
                                         \right)
\] and $\rho(G)=8.5208$.
 Thus from Theorem \ref{Th1} we see that $X_t(x_0, i_0)>0$ for $t\geqslant0$ a.s.. Meanwhile, $\A(2)$ is a nonsingular $\M$-matrix, then it follows from Theorem \ref{th3.1} that $X_t(x_0, i_0)$ of equation (\ref{w1}) is asymptotic boundedness in mean-square, and the Lyapunov exponent of $X_t(x_0, i_0)$ is no more than 0. Moreover, according to Theorem \ref{th4.1} the solution $X_t(x_0, i_0)$ of equation (\ref{w1}) is positive recurrence and admits a unique ergodic  stationary  distribution $\nu.$
These claims are supported by Figures 1, 2 and 3, respectively.
\begin{center}
{\includegraphics[scale=0.7]{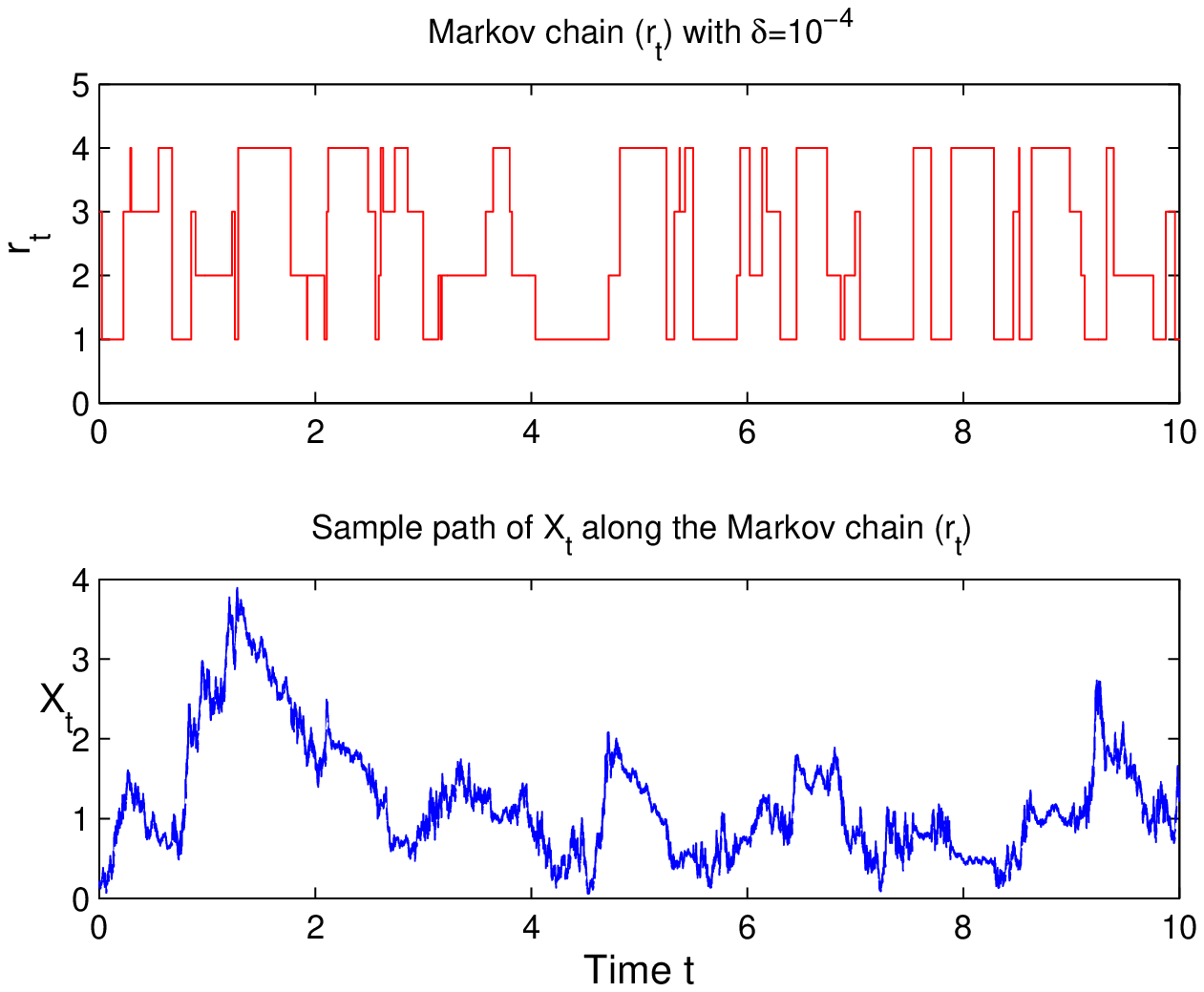}}\\
\footnotesize {\textbf{Figure 1.}\;\;
Sample path of $X_t$ (in blue line) along the Markov chain $(r_t)_{t\geqslant0}$ (in red line) with initial value $(x_0,r)=(0.2,3)$.}
\end{center}

\begin{center}
{\includegraphics[scale=0.7]{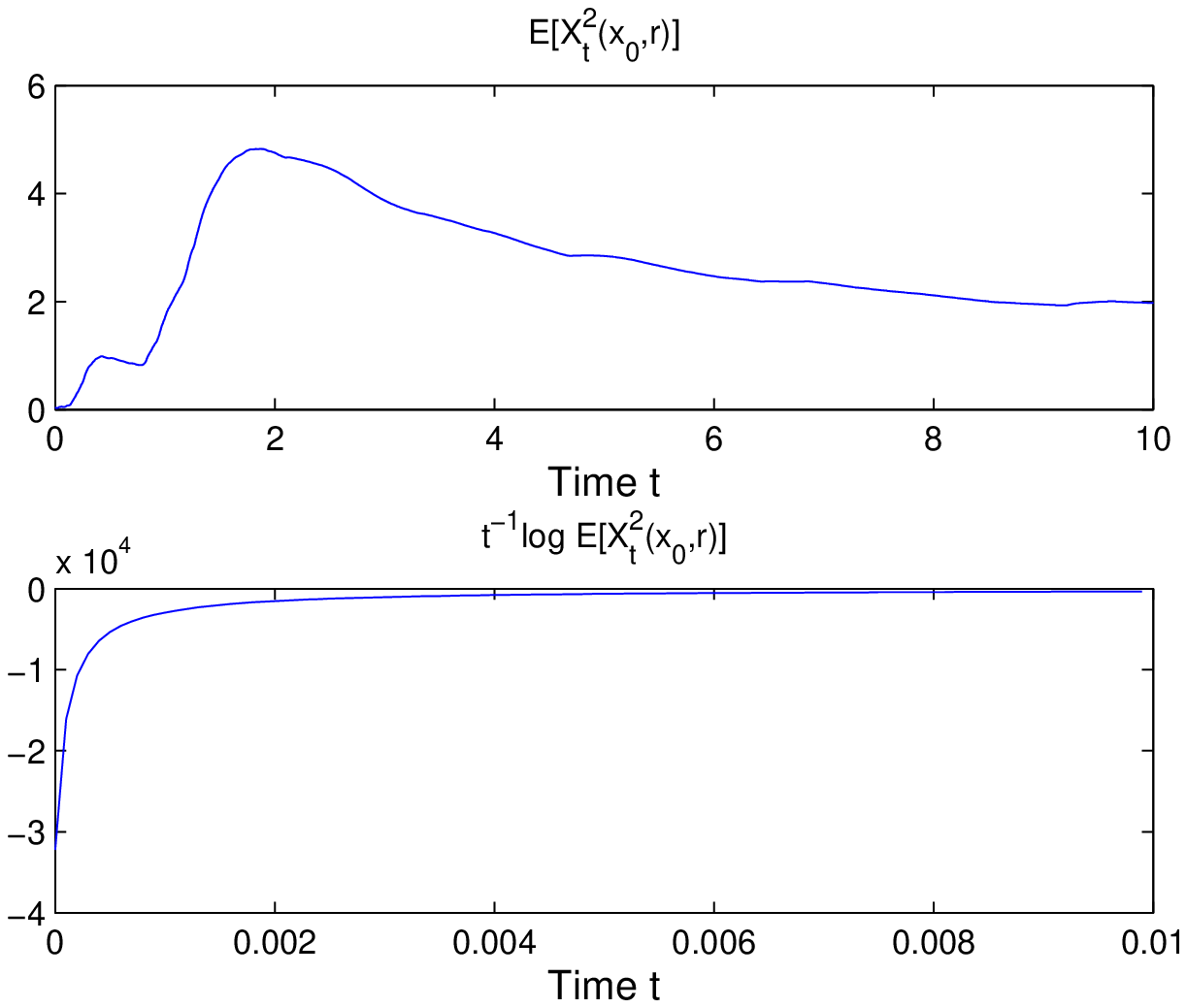}}\\
\footnotesize {\textbf{Figure 2.}\;\;
Mean-square value of $X_t$ and  $\frac1t \log E\left[X_t^2(x_0,r)\right]$ with initial value $(x_0,r)=(0.2,3)$.
}
\end{center}

\begin{center}
{\includegraphics[scale=0.7]{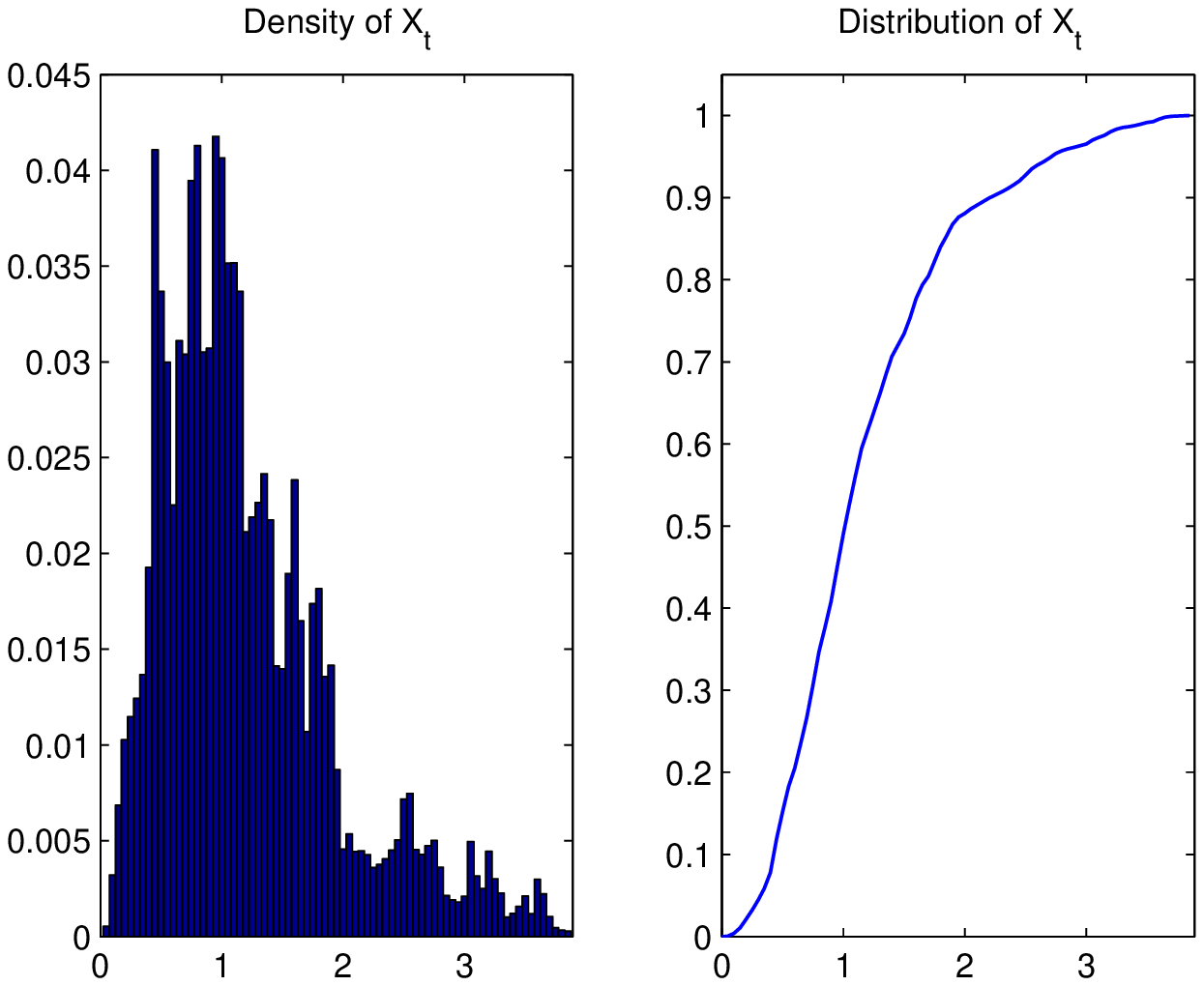}}\\
\footnotesize {\textbf{Figure 3.}\;\;
Density and distribution of $X_t$  with initial value $(x_0,r)=(0.2,3)$.}
\end{center}
\vspace{2mm}
\section{Conclusion}
In this paper, we investigate  a general stochastic volatility equation with regime switching, and show that for any initial value $(x_0,i_0)\in \R^+\times\M $ there is a unique global almost surely positive solution $X_t(x_0, i_0)$ to this equation; and give the $p$th moment estimation of  the solution by using the properties of nonsingular $M$-matrix, and present some simple  sufficient conditions for the existence of a unique ergodic stationary distribution of the equation.
 \par It will be more interesting if the Markov chain $(r_t)_{t\geqslant0}$ is state dependent or infinity, also the equation with jumps will be better than the one without jumps in describing some complicated dynamics behaviors in the real world.

\vspace{5mm}

\bibliographystyle{apalike}
{\footnotesize
}
\end{document}